\documentclass[12pt]{amsart}

\usepackage{amsmath,amssymb,amsfonts,amsthm,latexsym,graphicx,multirow,hyperref,enumerate}

\oddsidemargin=0.4in
\evensidemargin=0.4in
\topmargin=-0.2in
\textwidth=15cm
\textheight=23cm 

     \newcommand\Cay{\mathrm{Cay}}

\newcommand\K{\mathsf{K}}

\newcommand\Nor{\mathbf{N}}

\newtheorem{theorem}{Theorem}[section]
\newtheorem{lemma}[theorem]{Lemma}
\newtheorem{proposition}[theorem]{Proposition}
\newtheorem{corollary}[theorem]{Corollary}

\theoremstyle{definition}

\newtheorem*{remark}{Remark}

\begin{document}

\title[Subgroup perfect codes]{Characterization of subgroup perfect codes in Cayley graphs}

\author[Chen]{Jiyong Chen}
\address{Department of Mathematics\\Southern University of Science and Technology\\Shenzhen, Guangdong 518055\\People's Republic of China}
\email{chenjy@sustech.edu.cn}

\author[Wang]{Yanpeng Wang}
\address{School of Mathematical Sciences\\Peking University\\Beijing 100871\\People's Republic of China;
Also affiliated with Rongcheng Campus\\Harbin University of Science and Technology\\Harbin, Heilongjiang 150080\\People's Republic of China}
\email{wangyanpeng@pku.edu.cn}

\author[Xia]{Binzhou Xia}
\address{School of Mathematics and Statistics\\The University of Melbourne\\Parkville, VIC 3010\\Australia}
\email{binzhoux@unimelb.edu.au}

\maketitle

\begin{abstract}
A subset $C$ of the vertex set of a graph $\Gamma$ is called a perfect code in $\Gamma$ if every vertex of $\Gamma$ is at distance no more than $1$ to exactly one vertex of $C$. A subset $C$ of a group $G$ is called a perfect code of $G$ if $C$ is a perfect code in some Cayley graph of $G$. In this paper we give sufficient and necessary conditions for a subgroup $H$ of a finite group $G$ to be a perfect code of $G$. Based on this, we determine the finite groups that have no nontrivial subgroup as a perfect code, which answers a question by Ma, Walls, Wang and Zhou.

\textit{Key words:} Cayley graph; perfect code; cyclic group; generalized quaternion group

\textit{MSC2010:} 05C25; 05C69; 94B25
\end{abstract}

\section{Introduction}

In the paper, all groups considered are finite, and all graphs considered are finite, undirected and simple.
A subset $C$ of the vertex set of a graph $\Gamma$ is called a \emph{perfect code}~\cite{Kratochvil1986} in $\Gamma$ if every vertex of $\Gamma$ is at distance no more than $1$ to exactly one vertex of $C$ (in particular, $C$ is an independent set of $\Gamma$).
In some references, a perfect code is also called an \emph{efficient dominating set}~\cite{DS2003} or \emph{independent perfect dominating set}~\cite{Lee2001}.

In the study of perfect codes in graphs, special attention has been paid to perfect codes in Cayley graphs~\cite{Deng2014,DS2003,DSLW2017,FHZ2017,OPR2007,KM2013,CM2013}, which generalize perfect codes in many classical settings including Hamming codes and Lee codes.
Denote by $e$ the identity element of any group.
For a group $G$ and an inverse-closed subset $S$ of $G\setminus\{e\}$, the \emph{Cayley graph} $\Cay(G,S)$ on $G$ with \emph{connection set} $S$ is the graph with vertex set $G$ such that $x,y\in G$ are adjacent if and only if $yx^{-1}\in S$.
If a subset $C$ of $G$ is a perfect code in some Cayley graph of $G$, then $C$ is called a \emph{perfect code of $G$}.

The problem whether a subgroup is a perfect code of the group has attracted notable attention.
Given a normal subgroup $H$ of a group $G$, a sufficient and necessary condition for $H$ to be a perfect code of $G$ was given in~\cite{HXZ2018} as follows (we have replaced $g$ and $gh$ in the statement of~\cite[Theorem~2.2]{HXZ2018} with $x$ and $y$ respectively).

\begin{proposition}{\cite[Theorem~2.2]{HXZ2018}}\label{prop1}
Let $G$ be a group and $H$ be a normal subgroup of $G$. Then $H$ is a perfect code of $G$ if and only if for each $x\in G$ such that $x^2\in H$, there exists $y\in xH$ such that $y^2=e$.
\end{proposition}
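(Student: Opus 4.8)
\emph{Proof strategy.} The plan is to recast the perfect-code property as the existence of a symmetric transversal of $H$ and then extract the stated arithmetic condition from the way inversion acts on $G/H$. First I would set up the dictionary: a subset $C$ of the vertices of a graph is a perfect code precisely when the closed neighbourhoods $\{N[c]:c\in C\}$ partition the vertex set, and in $\Cay(G,S)$ the closed neighbourhood of $c$ is $(S\cup\{e\})c$. Writing $A=S\cup\{e\}$ --- an inverse-closed subset of $G$ containing $e$ --- we see that $H$ is a perfect code of $G$ if and only if some such $A$ has $\{Ah:h\in H\}$ a partition of $G$. The key elementary point is that, for any inverse-closed $A$ with $e\in A$, this holds if and only if the multiplication map $A\times H\to G$, $(a,h)\mapsto ah$, is a bijection, equivalently if and only if $A$ meets every coset of $H$ in $G$ in exactly one element. (As $H\trianglelefteq G$, left and right cosets coincide, so this is unambiguous.) Hence $H$ is a perfect code of $G$ if and only if $H$ admits an inverse-closed transversal in $G$ containing $e$, and it remains to show this is equivalent to the displayed condition.

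For necessity, suppose $A$ is such a transversal and $x^2\in H$. Passing to the quotient group $G/H$ (this is where normality is used), we have $\overline{x}^2=\overline{e}$, hence $\overline{x}^{-1}=\overline{x}$. Let $y$ be the unique element of $A$ in the coset $xH$. Then $y^{-1}\in A$ since $A=A^{-1}$, and $\overline{y^{-1}}=\overline{y}^{-1}=\overline{x}^{-1}=\overline{x}$, so $y^{-1}\in A\cap xH=\{y\}$; thus $y^2=e$ with $y\in xH$, as required.

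For sufficiency, I would build $A$ coset by coset, using that inversion $\overline{x}\mapsto\overline{x}^{-1}$ is a well-defined involution of $G/H$ whose fixed points are exactly the cosets $xH$ with $x^2\in H$. Take $e$ as the representative of $H$; for each other fixed coset $xH$ use the hypothesis to pick a representative $y\in xH$ with $y^2=e$, noting $y\neq e$; and for each $2$-element orbit $\{xH,x^{-1}H\}$ choose any $z\in xH$ and take $z$ and $z^{-1}$ as the two representatives. The resulting $A$ is a transversal, contains $e$, and satisfies $A=A^{-1}$ by construction, so $S=A\setminus\{e\}$ is a legitimate connection set and $H$ is a perfect code in $\Cay(G,S)$.

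The mathematical content is light, so I do not expect a real obstacle beyond the conceptual reformulation in the first step: once ``perfect code'' is identified with ``has an inverse-closed transversal containing $e$'', the equivalence with the arithmetic condition drops out of the orbit decomposition of $G/H$ under inversion. The points needing genuine care are keeping left and right cosets straight and being explicit about where normality enters --- namely in forming the group $G/H$ and in the identity $\overline{x}^{-1}=\overline{x}$ when $x^2\in H$.
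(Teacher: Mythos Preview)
Your argument is correct. Note, however, that the paper does not supply an independent proof of Proposition~\ref{prop1}: it is quoted from \cite{HXZ2018}, and the Remark after Theorem~\ref{thm2} observes that it is the special case of Theorem~\ref{thm2}(c) when $H\trianglelefteq G$ (so that $|H|/|H\cap H^x|=1$ and $Hx=xH$ for all $x$).

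Compared with the route via Theorem~\ref{thm2}, your proof is the natural specialization rather than a genuinely different method. The paper builds an auxiliary graph $\Gamma$ on $[G{:}H]$ whose components are $\K_m$ or $\K_{m,m}$; when $H$ is normal one has $m=1$, so every component is a single vertex (cosets with $x^2\in H$) or a single edge (pairs $\{Hx,Hx^{-1}\}$), which is precisely your orbit decomposition of $G/H$ under inversion. What your presentation buys is economy: with the quotient group available you can bypass the graph $\Gamma$ and Lemma~\ref{lem4} entirely. What the paper's approach buys is generality: the graph $\Gamma$ still makes sense when $H$ is not normal and there is no quotient group to speak of, and the parity condition $|H|/|H\cap H^x|$ odd emerges from asking which $\K_m$ components have odd order.
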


As an application of this result, the subgroup perfect codes of cyclic groups were completely determined in~\cite{HXZ2018}.
It turns out that a subgroup $H$ of a cyclic group $G$ is a perfect code of $G$ if and only if either $|H|$ or $|G/H|$ is odd (see~\cite[Corollary~2.8]{HXZ2018}).
The authors in~\cite{HXZ2018} also classified the subgroup perfect codes of dihedral groups (see~\cite[Theorem~2.11]{HXZ2018}).
Recently, the subgroup perfect codes of generalized quaternion groups were classified in~\cite{MWWZ} (see~\cite[Theorem~1.7]{MWWZ}).
Recall that a generalized quaternion group is a group of order $4n$ for some integer $n\geqslant 2$ with presentation
\[
\langle a,b\mid a^{2n}=e,b^2=a^n,a^b=a^{-1}\rangle.
\]

In this paper, we give a sufficient and necessary condition for a general subgroup $H$ of a group $G$ to be a perfect code of $G$. Note that for a subgroup $H$ of a group $G$, an inverse-closed right transversal of $H$ in $G$ is also a left transversal of $H$ in $G$.

\begin{theorem}\label{thm2}
Let $G$ be a group and let $H$ be a subgroup of $G$.
Then the following are equivalent:
\begin{enumerate}[{\rm(a)}]
\item $H$ is a perfect code of $G$;
\item there exists an inverse-closed right transversal of $H$ in $G$;
\item for each $x\in G$ such that $x^2\in H$ and $|H|/|H\cap H^x|$ is odd, there exists $y\in Hx$ such that $y^2=e$;
\item for each $x\in G$ such that $HxH=Hx^{-1}H$ and $|H|/|H\cap H^x|$ is odd, there exists $y\in Hx$ such that $y^2=e$.
\end{enumerate}
\end{theorem}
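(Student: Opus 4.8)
The plan is to make (a) and (b) the pivot: prove the equivalence (a)\,$\Leftrightarrow$\,(b) directly, then (b)\,$\Leftrightarrow$\,(d) and (c)\,$\Leftrightarrow$\,(d). Two elementary facts will be used throughout. The first is a coset-intersection lemma: if $a,b$ lie in the same double coset $HgH$, then $Ha\cap bH\neq\emptyset$ — for $b\in HgH$ gives $HbH=HgH$, whence $a\in HgH=HbH$, which is precisely the assertion $Ha\cap bH\neq\emptyset$. Consequently, inside a fixed double coset $D$, every right coset of $H$ meets every left coset of $H$; note also that $|H|/|H\cap H^x|$ equals $|HxH|/|H|$, the number of right (or left) cosets inside $HxH$, and so is a double-coset invariant. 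The second fact is a conjugation trick: if $HxH$ contains an element $y_0=h_1xh_2$ ($h_i\in H$) with $y_0^2=e$, then $h_2y_0h_2^{-1}=(h_2h_1)x\in Hx$ is again an involution; hence ``there is $y\in Hx$ with $y^2=e$'' is equivalent to ``$HxH$ contains an element of order dividing $2$'', a reformulation I would use freely.

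For (a)\,$\Leftrightarrow$\,(b): in $\Cay(G,S)$ the closed neighbourhood of $h$ is $(S\cup\{e\})h$, so $H$ is a perfect code there iff the sets $(S\cup\{e\})h$ ($h\in H$) partition $G$; setting $T=S\cup\{e\}$, this says exactly that $T$ is an inverse-closed set with $G=TH$ a unique factorisation, i.e.\ an inverse-closed left transversal — equivalently, by the remark preceding the theorem, an inverse-closed right transversal. Conversely, any inverse-closed right transversal can be modified (its representative of $H$ is an involution or $e$, so replace it by $e$) into one containing $e$, and then $S:=T\setminus\{e\}$ realises $H$ as a perfect code. Next, (d)\,$\Rightarrow$\,(c) is immediate, since $x^2\in H$ forces $HxH=Hx^{-1}H$. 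For (c)\,$\Rightarrow$\,(d): given $x$ with $HxH=Hx^{-1}H=:D$ and $|H|/|H\cap H^x|$ odd, choose any $a\in D$; self-pairedness gives $a^{-1}\in D$, so $a^{-1}H$ is a left coset inside $D$ and, by the lemma, meets $Ha$; any $z\in Ha\cap a^{-1}H$ satisfies $Hz=Ha=Hz^{-1}$, i.e.\ $z^2\in H$, with $HzH=D$ (same odd index), so (c) applied to $z$ produces an involution in $Hz\subseteq D$, which the conjugation trick moves into $Hx$.

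The remaining two directions carry the weight. For (b)\,$\Rightarrow$\,(d): let $T$ be an inverse-closed right transversal and let $x$ satisfy $HxH=Hx^{-1}H=:D$ with $|H|/|H\cap H^x|$ odd; then $T\cap D$ represents precisely the right cosets inside $D$, so it has odd cardinality, and since inversion maps both $T$ and $D$ to themselves it acts on $T\cap D$ as an involution, hence fixes some $t$, giving $t\in D$ with $t^2=e$ — now apply the conjugation trick. For (d)\,$\Rightarrow$\,(b): I would assemble an inverse-closed right transversal $T$ double coset by double coset. Put $e\in T$ for the double coset $H$. For a pair of distinct mutually inverse double cosets $D$ and $D^{-1}$: pick, via the lemma, elements $x_1,\dots,x_k\in D$ that simultaneously represent all right cosets and all left cosets inside $D$ (take $x_i$ in the $i$-th right coset $\cap$ $i$-th left coset), and add $x_1,\dots,x_k$ together with $x_1^{-1},\dots,x_k^{-1}$; since the $x_i$ lie in distinct left cosets, the $Hx_i^{-1}$ are the distinct right cosets of $D^{-1}$. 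For a self-paired $D=HxH\neq H$ with index $k=|H|/|H\cap H^x|$: if $k$ is even, pair its right cosets arbitrarily and, for a pair $(Hu,Hv)$, use the lemma to choose $z\in Hu\cap v^{-1}H$ (so $Hz=Hu$, $Hz^{-1}=Hv$) and add $z,z^{-1}$; if $k$ is odd, first invoke (d) to place an involution $y\in Hx$ in $T$ as the representative of $Hx$, and then pair up the remaining (even number of) right cosets as in the even case. The union over all double cosets is inverse-closed and contains exactly one representative of each right coset of $H$ in $G$, giving (b).

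The main obstacle is the construction in (d)\,$\Rightarrow$\,(b): the crucial realisation is that the coset-intersection lemma makes every pairing of right cosets within a self-paired double coset simultaneously achievable by an element and its inverse, so the parity of $|H|/|H\cap H^x|$ is the only real constraint, and in the odd case it is exactly hypothesis (d) that supplies the needed self-inverse representative. Once this and the two elementary facts are in hand, the parity argument for (b)\,$\Rightarrow$\,(d) and the equivalence (a)\,$\Leftrightarrow$\,(b) are routine.
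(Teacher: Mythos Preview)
Your proof is correct and rests on the same ideas as the paper's, but you have stripped away the auxiliary graph $\Gamma$ on $[G{:}H]$ that the paper builds its argument around. The paper defines $\Gamma$ so that $Hx$ and $Hy$ are adjacent iff $y\in Hx^{-1}H$, proves (Lemma~\ref{lem4}) that each connected component is either $\K_m$ (self-paired double coset) or $\K_{m,m}$ (inverse pair of double cosets), and then runs the cycle (b)$\Rightarrow$(c)$\Rightarrow$(d)$\Rightarrow$(b): for (b)$\Rightarrow$(c) an inverse-closed transversal gives a spanning subgraph of $\Gamma$ of maximum degree $1$, forcing an isolated vertex in any odd component; for (d)$\Rightarrow$(b) one finds a matching in $\Gamma$ missing exactly one vertex per odd component, and edges of this matching decode into pairs $z,z^{-1}$. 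Your coset-intersection lemma is exactly the statement that $\Gamma_x=\K_m$ in the self-paired case, your ``conjugation trick'' is the same step the paper uses, your parity argument on $T\cap D$ is the isolated-vertex argument without the graph, and your double-coset-by-double-coset assembly is the matching construction spelled out by hand. The organisational difference --- you prove (b)$\Leftrightarrow$(d) and (c)$\Leftrightarrow$(d) rather than a single cycle --- is cosmetic. What you gain is a more self-contained and elementary write-up; what the paper's graph buys is a uniform picture in which the parity obstruction and the transversal construction are both visibly about matchings in complete graphs.
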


\begin{remark}
The equivalence between~(a) and~(b) in Theorem~\ref{thm2} has already been known (see~\cite[Lemma~2.1]{HXZ2018} and~\cite[Lemma~2.2]{MWWZ}). Also note that if $H$ is a normal subgroup of $G$ then $|H|/|H\cap H^x|=1$ and $Hx=xH$ for each $x\in G$. Thus the sufficient and necessary condition in Proposition~\ref{prop1} is statement~(c) of Theorem~\ref{thm2} in the special case when $H$ is normal in $G$.
\end{remark}

A group $G$ is said to be \emph{code-perfect} if every subgroup of $G$ is a perfect code of $G$. Ma, Walls, Wang and Zhou proved that a group $G$ is code-perfect if and only if it has no elements of order four~\cite[Theorem~1.1]{MWWZ}.
As a natural question in the counterpart, they also proposed to study which groups have no nontrivial proper subgroup perfect code (see the paragraph after Theorem~1.5 of \cite{MWWZ}) and proved that cyclic $2$-groups are examples of such groups~\cite[Theorem~1.6]{MWWZ}.
Another (trivial) family of examples are groups of prime order as they have no nontrivial proper subgroup.
Our next main result completely answers the question by showing that a group of composite order has no nontrivial proper subgroup as a perfect code if and only if it is a cyclic $2$-group or a generalized quaternion $2$-group.

\begin{theorem}\label{thm5}
Let $G$ be a group of composite order.
Then the following are equivalent:
\begin{enumerate}[{\rm(a)}]
\item $G$ has no nontrivial proper subgroup as a perfect code;
\item $G$ is a $2$-group with a unique involution;
\item $G$ is either a cyclic $2$-group or a generalized quaternion $2$-group.
\end{enumerate}
\end{theorem}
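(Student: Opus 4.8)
The plan is to establish (b) $\Leftrightarrow$ (c) and (a) $\Leftrightarrow$ (b). The equivalence (b) $\Leftrightarrow$ (c) is the classical description of the finite $2$-groups with a unique involution (equivalently, a unique subgroup of order $2$) as the cyclic and the generalized quaternion $2$-groups, so I concentrate on (a) $\Leftrightarrow$ (b). For (b) $\Rightarrow$ (a), suppose $G$ is a $2$-group with a unique involution $z$, so that $\langle z\rangle$ is characteristic, hence central. Let $H$ be a nontrivial proper subgroup; as $H$ is a nontrivial $2$-group, $z\in H$. Since $N_G(H)>H$ in a $p$-group, $N_G(H)/H$ contains an element $xH$ of order $2$, so $x\in N_G(H)\setminus H$ with $x^2\in H$ and $|H|/|H\cap H^x|=1$ odd. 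Theorem~\ref{thm2}(c) would then force some $y\in Hx$ with $y^2=e$; but $e$ and $z$ are the only elements of $G$ with square $e$, and neither lies in $Hx$ (since $x\notin H$ gives $e\notin Hx$, while $z\in H$ gives $Hz=H\neq Hx$). Hence no nontrivial proper subgroup of $G$ is a perfect code.

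For (a) $\Rightarrow$ (b) I would prove the contrapositive, using two general facts. \emph{Fact A}: every subgroup $H$ of odd order is a perfect code of $G$. Indeed, in Theorem~\ref{thm2}(c) the ratio $|H|/|H\cap H^x|$ is automatically odd, so it suffices that each $x$ with $x^2\in H$ admit $y\in Hx$ with $y^2=e$: take $y=e$ when $x$ has odd order (then $x\in\langle x^2\rangle\leq H$), and $y=x^m$ with $m=|x|/2$ when $x$ has even order (here $x^2\in H$ forces $|x|=2m$ with $m$ odd, and $x^{m-1}=(x^2)^{(m-1)/2}\in H$, so $x^m\in Hx$ with $(x^m)^2=e$). \emph{Fact B}: if $H\trianglelefteq G$ has a complement $K$, then $H$ is a perfect code of $G$. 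Indeed, writing $x=hk$ with $h\in H$ and $k\in K$, one has $x^2\in H$ if and only if $k^2=e$, and then $k\in xH$ (because $xH=kH$) with $k^2=e$, so Proposition~\ref{prop1} applies.

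Now suppose $G$ has composite order and is not a $2$-group with a unique involution. If $|G|$ is not a power of $2$, then a subgroup of odd prime order (which exists by Cauchy's theorem) is nontrivial, proper, and a perfect code by Fact~A. So assume $G$ is a $2$-group; then $G$ has at least two involutions, $|G|\geq4$, and $G$ is not cyclic. If $G$ is abelian it has a nontrivial proper direct factor $A$, which is a perfect code by Fact~B. So assume $G$ is non-abelian. If $G$ has an involution $z$ that is not a square, then $\langle z\rangle$ (nontrivial, and proper since $|G|\geq4$) is a perfect code: in Theorem~\ref{thm2}(c) only $x\in N_G(\langle z\rangle)=C_G(z)$ with $x^2\in\langle z\rangle$ need be checked, and then $x^2=e$ (take $y=x$) since $x^2=z$ cannot occur.

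The remaining case, which I expect to be the main obstacle, is that $G$ is a non-abelian $2$-group in which every involution is a square; then every involution lies in $\Phi(G)$, so no maximal subgroup is a perfect code (a maximal subgroup $M$, being normal of index $2$, is a perfect code if and only if $G\setminus M$ contains an involution, by Fact~B or Proposition~\ref{prop1}), and a perfect code here must have index at least $4$. My plan to handle it is twofold. First, try to exhibit a nontrivial proper normal subgroup of $G$ with a complement and apply Fact~B. Second, when that is not available, pass to $\overline G=G/\langle z\rangle$ for a central involution $z$; this is again a $2$-group with at least two involutions and of smaller order, so by induction it has a nontrivial proper perfect code, and the task is to choose that perfect code so that its full preimage in $G$ still admits coset representatives of order at most $2$, hence remains a perfect code of $G$. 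As an alternative to the second step, one can argue with cyclic subgroups: for $w$ of maximal order $2^k\geq4$ in $G$, the subgroup $\langle w\rangle$ (proper since $G$ is not cyclic) is a perfect code unless it is the cyclic maximal subgroup of a generalized quaternion subgroup $\langle w,x\rangle$ of $G$ with $x^2=w^{2^{k-1}}$, and this exceptional configuration must then be eliminated or treated on its own.
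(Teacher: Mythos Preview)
Your argument for (b) $\Rightarrow$ (a) is correct and essentially the same as the paper's. The equivalence (b) $\Leftrightarrow$ (c) is indeed classical. Your Facts~A and~B are correct, and the reduction to the case of a $2$-group is fine (the paper does this by observing that a Sylow $2$-subgroup is always a perfect code, but your route through subgroups of odd order works equally well).

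The genuine gap is in (a) $\Rightarrow$ (b), precisely at your ``remaining case'': a non-abelian $2$-group $G$ with at least two involutions, all of which are squares. This case is not vacuous---for instance $G=Q_8\times Q_8$ has exactly the three involutions $(-1,1)$, $(1,-1)$, $(-1,-1)$, each a square---so it must actually be handled, and you have only sketched plans. Your first plan (find a complemented normal subgroup) happens to succeed for $Q_8\times Q_8$ but is not available in general. Your second plan (induct through $G/\langle z\rangle$) requires a lifting step that is not automatic: a coset of the preimage may fail to contain an involution even when the corresponding quotient coset does. Your third plan explicitly leaves the quaternion configuration unresolved, and that configuration is exactly what occurs in the examples above.

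The paper avoids this case analysis entirely by proving (a) $\Rightarrow$ (b) \emph{directly} rather than by contraposition. Assuming $G$ is a $2$-group with no nontrivial proper subgroup perfect code, start with any subgroup $H_1$ of order~$2$. Since $H_1$ is not a perfect code, Corollary~\ref{cor2} supplies $x_1\in\Nor_G(H_1)\setminus H_1$ with $x_1^2\in H_1$ such that $H_1x_1$ contains no involution; hence $H_2=\langle H_1,x_1\rangle=H_1\cup H_1x_1$ has order~$4$ and a unique involution. Iterating, one doubles the subgroup at each step while preserving uniqueness of the involution, until the chain reaches $G$ itself. This short inductive tower sidesteps the structural dichotomy you are attempting; I recommend replacing your contrapositive argument with it.
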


The rest of this paper is organized as follows. In Section~\ref{sec2} we prove Theorem~\ref{thm2} and list some of its corollaries. Based on this, we then prove Theorem~\ref{thm5} in section~\ref{sec4}.

\section{Proof of Theorem~\ref{thm2}}\label{sec2}

Throughout this section, let $G$ be a group and let $H$ be a subgroup of $G$.
Denote by $[G{:}H]$ the set of right cosets of $H$ in $G$, and define a binary relation $\sim$ on $[G{:}H]$ such that
\[
Hx\sim Hy\Leftrightarrow y\in Hx^{-1}H.
\]
It is readily seen that the relation $\sim$ is well-defined and symmetric.
Now let $\Gamma$ be the graph with vertex set $[G{:}H]$ such that $\{Hx,Hy\}$ is an edge if and only if $Hx\sim Hy$ and $Hx\neq Hy$.
Then for each $x\in G$ and $h\in H$,
\begin{equation}\label{eq1}
Hxh\sim Hy\text{ for all $y\in x^{-1}H$}\quad\text{and}\quad Hx^{-1}h\sim Hz\text{ for all $z\in xH$}.
\end{equation}
Since each double coset of $H$ is a union of right cosets of $H$ in $G$, we may view $HxH$ and $Hx^{-1}H$ as sets of vertices of $\Gamma$. Then~\eqref{eq1} shows that the induced subgraph by $HxH\cup Hx^{-1}H$ is a connected component of $\Gamma$, which we denote by $\Gamma_x$.

The next lemma describes the connected component $\Gamma_x$ of $\Gamma$.

\begin{lemma}\label{lem4}
Let $\Gamma_x$ be as above and let $m=|H|/|H\cap H^x|$. If $HxH=Hx^{-1}H$, then $\Gamma_x$ is the complete graph $\K_m$. If $HxH\neq Hx^{-1}H$, then $\Gamma_x$ is the complete bipartite graph $\K_{m,m}$. 
\end{lemma}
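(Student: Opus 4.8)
The plan is to analyze the structure of $\Gamma_x$ by first pinning down its vertex set and the adjacency relation within it. The vertices of $\Gamma_x$ are the right cosets of $H$ contained in $HxH \cup Hx^{-1}H$. By a standard double-coset counting argument, the number of right cosets of $H$ inside $HxH$ equals $|HxH|/|H| = |H|/|H \cap H^x| = m$, and the same holds for $Hx^{-1}H$. So in the case $HxH = Hx^{-1}H$ there are $m$ vertices, and in the case $HxH \neq Hx^{-1}H$ there are $2m$ vertices, split as $m + m$ between the two double cosets.

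Next I would determine the edges. From~\eqref{eq1}, writing a general vertex of $HxH$ as $Hxh$ with $h \in H$, its neighbours (and itself) are exactly the cosets $Hy$ with $y \in x^{-1}H$, i.e.\ all the cosets comprising $Hx^{-1}H$ (using that $x^{-1}H \subseteq Hx^{-1}H$ and that $x^{-1}H$ meets every right coset inside $Hx^{-1}H$ — this last point is precisely the computation that the $m$ right cosets inside $Hx^{-1}H$ are the cosets $Hx^{-1}h'$, and $x^{-1}H$ runs over representatives of all of them). Symmetrically, every vertex of $Hx^{-1}H$ is $\sim$-related to every vertex of $HxH$. Thus:
\begin{itemize}
\item if $HxH \neq Hx^{-1}H$, then $\Gamma_x$ has no edges inside either $HxH$ or $Hx^{-1}H$ (if $Hxh \sim Hxh''$ then $xh'' \in Hx^{-1}H \cap HxH = \emptyset$, contradiction), and every vertex of one part is joined to every vertex of the other, giving $\K_{m,m}$;
\item if $HxH = Hx^{-1}H$, then every vertex is $\sim$-related to every vertex (including itself), so after deleting loops we get $\K_m$.
\end{itemize}

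The main obstacle — really the only non-routine point — is verifying carefully that the set $x^{-1}H$ (a single left coset, hence of size $|H|$) hits each of the $m$ right cosets comprising $Hx^{-1}H$ exactly $|H\cap H^x|$ times, so that ``$Hy$ for all $y \in x^{-1}H$'' in~\eqref{eq1} genuinely ranges over \emph{all} vertices of $Hx^{-1}H$ and not a proper subset. This is a direct orbit-counting / coset-intersection computation: the right cosets inside $Hx^{-1}H$ are indexed by $H \cap H^{x}\backslash H$ via $Hx^{-1}h$, and $x^{-1}h, x^{-1}h'$ lie in the same right coset iff $h'h^{-1} \in x^{-1}Hx \cap H = H^x \cap H$; hence $x^{-1}H$ meets all $m$ of them. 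Once this is in hand, the identification of $\Gamma_x$ with $\K_m$ or $\K_{m,m}$ is immediate from the edge description above, and one should also note in passing that $\Gamma_x$ is indeed connected (so it is a full connected component, as claimed in the paragraph preceding the lemma), which follows since in both cases the graph described is connected.
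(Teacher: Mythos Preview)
Your proof is correct and follows essentially the same approach as the paper's: count the right cosets in each double coset via $|HxH|/|H|=m$, then use~\eqref{eq1} to read off that every vertex of $HxH$ is $\sim$-related to every vertex of $Hx^{-1}H$, yielding $\K_m$ or $\K_{m,m}$ according to whether the two double cosets coincide. You are more careful than the paper on two points---that the cosets $Hy$ for $y\in x^{-1}H$ genuinely exhaust the vertices in $Hx^{-1}H$, and that in the bipartite case there are no edges within a part---but both are implicit in the paper's argument and your extra detail is sound.
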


\begin{proof}
Since $|HxH|/|H|=|Hx^{-1}H|/|H|=m$, it follows from our convention that both $HxH$ and $Hx^{-1}H$ are regarded as a set of $m$ vertices of $\Gamma$. If $HxH=Hx^{-1}H$, then $\Gamma_x$ has vertex set $HxH$, and we see from~\eqref{eq1} that each vertex in $HxH$ is adjacent to all the other vertices in $HxH$. This shows that the condition $HxH=Hx^{-1}H$ implies $\Gamma_x=\K_m$. If $HxH=Hx^{-1}H$, then $\Gamma_x$ has vertex set $HxH\cup Hx^{-1}H$ with $HxH\cap Hx^{-1}H=\emptyset$, and~\eqref{eq1} shows that each vertex in $HxH$ is adjacent to all the vertices in $Hx^{-1}H$. Hence, under the condition $HxH\neq Hx^{-1}H$, we have $\Gamma_x=\K_{m,m}$ with parts $HxH$ and $Hx^{-1}H$. This completes the proof.
\end{proof}

The graph $\Gamma$ defined above plays a key role in the following proof of Theorem~\ref{thm2}.

\begin{proof}[Proof of Theorem~$\ref{thm2}$]
It follows from~\cite[Lemma~2.1]{HXZ2018} that (a)$\Leftrightarrow$(b).

To prove (b)$\Rightarrow$(c), suppose that~(b) holds, which means that there exists an inverse-closed right transversal $S$ of $H$ in $G$.
Let $x\in G$ such that $x^2\in H$ and $|H|/|H\cap H^x|$ is odd.
Define a graph $\Sigma$ with vertex set $[G{:}H]$ and edge set
\[
\{\{Hs,Hs^{-1}\}\mid s\in S,\, s^2\neq e\}.
\]
Then $\Sigma$ is a spanning subgraph of $\Gamma$, and each vertex of $\Sigma$ has valency $0$ or $1$.
Consider the subgraph $\Sigma_x$ of $\Sigma$ induced by the vertex set $V(\Gamma_x)$ of $\Gamma_x$.
It follows that each vertex of $\Sigma_x$ has valency $0$ or $1$.
Moreover, we deduce from $x^2\in H$ that $HxH=Hx^{-1}H$, and so Lemma~\ref{lem4} asserts that $\Gamma_x=\K_m$ with $m=|H|/|H\cap H^x|$.
As a consequence, $|V(\Sigma_x)|=|V(\Gamma_x)|=m$ is odd.
Therefore, $\Sigma_x$ has an isolated vertex, say, $Hz$ with $z\in S$.
Since $\Sigma_x$ is the subgraph of $\Sigma$ induced by $V(\Gamma_x)$ while $\Gamma_x$ is a connected component of $\Gamma$, it follows that $Hz$ is an isolated vertex of $\Sigma$, which means that $z^2=e$.
As $Hz\in V(\Gamma_x)$, we have $z\in HxH$ and hence $z=h_1xh_2$ for some $h_1,h_2\in H$.
Take $y=h_2zh_2^{-1}=h_2h_1x\in Hx$.
Then we deduce from $z^2=e$ that $y^2=e$.
This shows that~(c) holds.

Next we prove (c)$\Rightarrow$(d).
Suppose that~(c) holds.
Let $x\in G$ such that $HxH=Hx^{-1}H$ and $|H|/|H\cap H^x|$ is odd.
Then $x^{-1}\in HxH$, which means that there exist $h_1,h_2\in H$ with $x^{-1}=h_1xh_2$.
This yields $(h_1x)^2=h_1h_2^{-1}$ and so $(h_1x)^2\in H$.
Moreover, since $H^{h_1x}=H^x$, we see that $|H|/|H\cap H^{h_1x}|=|H|/|H\cap H^x|$ is odd.
Hence statement~(c) implies that there exists $y\in Hh_1x$ with $y^2=e$.
Note that $Hh_1x=Hx$.
We then conclude that~(d) holds.

Now we embark on the proof of (d)$\Rightarrow$(b).
Suppose that~(d) holds. Let $Hx_1,\dots,Hx_k$ be the vertices of $\Gamma$ such that $\Gamma_{x_i}$ is a complete graph of odd order.
Then we derive from Lemma~\ref{lem4} that $Hx_iH=Hx_i^{-1}H$ with $|H|/|H\cap H^{x_i}|$ odd for each $i\in\{1,\dots,k\}$, and each connected component of $\Gamma$ other than $\Gamma_{x_1},\dots,\Gamma_{x_k}$ is either a complete graph of even order or a complete bipartite graph.
For each $i\in\{1,\dots,k\}$, statement~(d) asserts that there exists $y_i\in Hx_i$ with $y_i^2=e$. In particular, $Hy_i=Hx_i$ is a vertex of $\Gamma_{x_i}$, a connected component of $\Gamma$ that is a complete graph of odd order.
It follows that $\Gamma$ has a matching with vertex set $V(\Gamma)\setminus\{Hy_1,\dots,Hy_k\}$.
Let $\{Hu_1,Hv_1\},\dots,\{Hu_\ell,Hv_\ell\}$ be the edges of this matching.
By the adjacency in $\Gamma$, for each $j\in\{1,\dots,\ell\}$, we have $v_j=g_ju_j^{-1}h_j$ for some $g_j,h_j\in H$.
Let $z_j=h_j^{-1}u_j$.
Then $Hz_j=Hu_j$ and $Hz_j^{-1}=Hu_j^{-1}h_j=Hg_j^{-1}v_j=Hv_j$.
Hence
\[
\{Hz_1,Hz_1^{-1},\dots,Hz_\ell,Hz_\ell^{-1}\}=\{Hu_1,Hv_1,\dots,Hu_\ell,Hv_\ell\}=V(\Gamma)\setminus\{Hy_1,\dots,Hy_k\}
\]
and so
\[
[G{:}H]=V(\Gamma)=\{Hy_1,\dots,Hy_k,Hz_1,Hz_1^{-1},\dots,Hz_\ell,Hz_\ell^{-1}\}.
\]
Since $y_i^2=e$, the set $\{y_1,\dots,y_k,z_1,z_1^{-1},\dots,z_\ell,z_\ell^{-1}\}$ is inverse-closed.
This shows that there exists an inverse-closed right transversal of $H$ in $G$, proving~(b).
\end{proof}

As a first corollary of Theorem~\ref{thm2}, we verify a result in~\cite{MWWZ} stating that every group of odd order is code-perfect.

\begin{corollary}\label{cor1}{\cite[Corollary~1.2]{MWWZ}}
Let $G$ be a group of odd order and let $H$ be a subgroup of $G$.
Then $H$ is a perfect code of $G$.
\end{corollary}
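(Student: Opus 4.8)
The plan is to deduce this immediately from Theorem~\ref{thm2} via the implication (c)$\Rightarrow$(a). Concretely, I would show that condition~(c) of Theorem~\ref{thm2} is satisfied automatically by \emph{every} subgroup $H$ of a group $G$ of odd order, and there is nothing more to check.

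The key observation is that squaring is very rigid in a group of odd order. If $x\in G$ has order $2k+1$, then $x^{2k+1}=e$ gives $x^{-1}=x^{2k}$, so $x=x^{-2k}=(x^2)^{-k}$. In particular, $x$ lies in the cyclic subgroup generated by $x^2$. Hence whenever $x\in G$ satisfies $x^2\in H$, we automatically get $x=(x^2)^{-k}\in H$, and therefore $Hx=H$.

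Now let $x\in G$ be such that $x^2\in H$ and $|H|/|H\cap H^x|$ is odd (the second hypothesis is not even needed). By the previous paragraph $x\in H$, so $e\in Hx$, and taking $y=e$ we have $y\in Hx$ with $y^2=e$. Thus statement~(c) of Theorem~\ref{thm2} holds, so $H$ is a perfect code of $G$. There is no real obstacle here; the corollary is essentially a one-line consequence of Theorem~\ref{thm2}. (One could alternatively use~(b) directly: since $|G|$ is odd, the squaring map is a bijection on $G$, which lets one assemble an inverse-closed right transversal of $H$ by hand, but appealing to~(c) is shorter and cleaner.)
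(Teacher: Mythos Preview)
Your proof is correct and follows exactly the same route as the paper: verify condition~(c) of Theorem~\ref{thm2} by noting that $x^2\in H$ forces $x\in H$ (since $x$ has odd order) and then take $y=e$. The paper's argument is the same, just with less detail on why $x^2\in H\Rightarrow x\in H$.
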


\begin{proof}
Let $x\in G$ such that $x^2\in H$.
Since $|G|$ is odd, the element $x$ has odd order.
Thus the condition $x^2\in H$ implies that $x\in H$.
Take $y=e$.
Then $y\in H=Hx$ and $y^2=e$.
This shows that statement~(c) of Theorem~\ref{thm2} holds, and so $H$ is a perfect code of $G$.
\end{proof}

Note in Theorem~\ref{thm2} that if $|H|$ is a power of $2$, then $|H|/|H\cap H^x|$ is odd if and only if $x\in\Nor_G(H)$.
We then obtain the following corollary of Theorem~\ref{thm2}.

\begin{corollary}\label{cor2}
Let $G$ be a group and let $H$ be a $2$-subgroup of $G$.
Then the following are equivalent:
\begin{enumerate}[{\rm(a)}]
\item $H$ is a perfect code of $G$;
\item for each $x\in\Nor_G(H)$ such that $x^2\in H$, there exists $y\in Hx$ such that $y^2=e$;
\item for each $x\in\Nor_G(H)\setminus H$ such that $x^2\in H$, there exists an involution in $Hx$.
\end{enumerate}
\end{corollary}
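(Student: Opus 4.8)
The plan is to derive the corollary directly from the equivalence (a)$\Leftrightarrow$(c) in Theorem~\ref{thm2}, using the observation recorded just before the statement. Since $|H|$ is a power of $2$, the index $|H|/|H\cap H^x|$ is itself a power of $2$, hence it is odd precisely when it equals $1$, i.e.\ when $H\cap H^x=H$; and as $H$ and $H^x$ have the same finite order, this happens exactly when $H^x=H$, that is, when $x\in\Nor_G(H)$. Substituting this into statement~(c) of Theorem~\ref{thm2} turns the condition ``for each $x\in G$ with $x^2\in H$ and $|H|/|H\cap H^x|$ odd'' into ``for each $x\in\Nor_G(H)$ with $x^2\in H$'', which is precisely statement~(b) of the corollary. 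This yields (a)$\Leftrightarrow$(b) with essentially no work beyond the parenthetical remark.

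It then remains to prove (b)$\Leftrightarrow$(c), which is an elementary bookkeeping argument about whether the element $y$ with $y^2=e$ can be taken to be a genuine involution. For (b)$\Rightarrow$(c), take $x\in\Nor_G(H)\setminus H$ with $x^2\in H$; statement~(b) supplies $y\in Hx$ with $y^2=e$, and since $x\notin H$ we have $Hx\neq H$, so $e\notin Hx$ and therefore $y\neq e$, i.e.\ $y$ is an involution. For (c)$\Rightarrow$(b), take $x\in\Nor_G(H)$ with $x^2\in H$; if $x\in H$ then $y=e$ lies in $H=Hx$ and satisfies $y^2=e$, while if $x\notin H$ then $x\in\Nor_G(H)\setminus H$ and statement~(c) gives an involution $y\in Hx$, which in particular satisfies $y^2=e$. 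In either case statement~(b) holds.

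I do not anticipate any real obstacle here: the substantive content is entirely contained in Theorem~\ref{thm2}, and the only points requiring care are the translation ``$|H|/|H\cap H^x|$ odd $\Leftrightarrow$ $x\in\Nor_G(H)$'' for $2$-subgroups, and the degenerate case $x\in H$ in the last implication, where one must explicitly exhibit $y=e$ rather than an involution. The ``hard'' part, if any, is purely notational bookkeeping of the conditions $x^2\in H$ and $x\in\Nor_G(H)$.
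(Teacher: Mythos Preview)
Your proposal is correct and matches the paper's approach exactly: the paper itself provides no explicit proof, stating only the key observation that for a $2$-subgroup $H$ the index $|H|/|H\cap H^x|$ is odd if and only if $x\in\Nor_G(H)$, and then records the corollary as immediate from Theorem~\ref{thm2}. Your write-up simply fills in the details the paper leaves implicit, including the elementary (b)$\Leftrightarrow$(c) bookkeeping.
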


A further corollary as follows says that every Sylow $2$-subgroup is a perfect code.

\begin{corollary}\label{cor5}
Let $G$ be a group and let $H$ be a Sylow $2$-subgroup of $G$.
Then $H$ is a perfect code of $G$.
\end{corollary}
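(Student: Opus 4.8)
The plan is to derive Corollary~\ref{cor5} from Corollary~\ref{cor2} by checking condition~(b) there. So let $H$ be a Sylow $2$-subgroup of $G$ and let $x\in\Nor_G(H)$ with $x^2\in H$. I want to produce $y\in Hx$ with $y^2=e$. Since $x$ normalizes $H$, the coset $Hx$ lies in the normalizer $N:=\Nor_G(H)$, and $H$ is a normal Sylow $2$-subgroup of $N$. The key structural fact I would invoke is the Schur--Zassenhaus theorem: because $H\trianglelefteq N$ with $\gcd(|H|,|N/H|)=1$, there is a complement $K$ to $H$ in $N$, so $N=H\rtimes K$ with $|K|=|N/H|$ odd.

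Next I would analyze the image of $x$ in $N/H$. Since $x^2\in H$, the element $xH$ has order $1$ or $2$ in $N/H$; but $N/H\cong K$ has odd order, so $xH=H$, i.e.\ $x\in H$. Then I simply take $y=e$: clearly $y\in H=Hx$ and $y^2=e$, so condition~(b) of Corollary~\ref{cor2} holds, and hence $H$ is a perfect code of $G$. In fact this argument shows the hypothesis in~(b) is only ever met by $x\in H$, so condition~(c) of Corollary~\ref{cor2} is satisfied vacuously.

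Actually the Schur--Zassenhaus theorem is overkill here: all I really need is that $N/H$ has odd order, which is immediate because $H$ is a Sylow $2$-subgroup of $G$ (hence of $N$), so $|N/H|$ divides the odd number $|G|/|H|$. Thus the cleanest route is: for $x\in\Nor_G(H)$ with $x^2\in H$, the coset $xH$ has order dividing $2$ in the odd-order group $\Nor_G(H)/H$, forcing $x\in H$; then $y=e$ works. The main (and only) thing to be careful about is the trivial observation that being Sylow in $G$ makes $H$ Sylow in every subgroup containing it, in particular in $\Nor_G(H)$; there is no real obstacle, as the substance of the work has already been done in establishing Theorem~\ref{thm2} and Corollary~\ref{cor2}.
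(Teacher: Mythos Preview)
Your proof is correct and follows essentially the same approach as the paper: both arguments verify Corollary~\ref{cor2} vacuously by showing that any $x\in\Nor_G(H)$ with $x^2\in H$ must already lie in $H$. The paper phrases this as ``$\langle H,x\rangle$ would be a $2$-subgroup of order $2|H|$, contradicting Sylow,'' while you phrase it as ``$xH$ has order dividing $2$ in the odd-order quotient $\Nor_G(H)/H$''---these are two ways of expressing the same elementary observation.
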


\begin{proof}
Let $H$ be a Sylow $2$-subgroup of $G$.
Suppose that there exists $x\in\Nor_G(H)\setminus H$ such that $x^2\in H$.
Then $\langle H,x\rangle=H\cup Hx$ with $H\cap Hx=\emptyset$.
However, this implies $|\langle H,x\rangle|=2|H|$, contradicting the condition that $H$ is a Sylow $2$-subgroup of $G$.
Hence there is no $x\in\Nor_G(H)\setminus H$ such that $x^2\in H$.
Thereby statement~(c) of Corollary~\ref{cor2} holds, and so $H$ is a perfect code of $G$.
\end{proof}

\section{Proof of Theorem~\ref{thm5}}\label{sec4}

It is well-known that if $G$ is a $2$-group with a unique involution, then $G$ is either a cyclic group or a generalized quaternion group (see~\cite[105~Theorem~VI]{Burnside1911}).
Conversely, if $G$ is a cyclic $2$-group or a generalized quaternion group $2$-group, then $G$ has a unique involution.
This is obvious if $G$ is cyclic, and is true if 
\[
G=\langle a,b\mid a^{2n}=e,b^2=a^n,a^b=a^{-1}\rangle
\] 
is a generalized quaternion group since the elements outside the cyclic subgroup $\langle a\rangle$ all have order $4$.
Thus we have the following lemma.

\begin{lemma}\label{lem2}
Let $G$ be a group. Then $G$ is a $2$-group with a unique involution if and only if $G$ is either a cyclic $2$-group or a generalized quaternion $2$-group.
\end{lemma}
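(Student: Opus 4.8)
The plan is to prove the two implications of the equivalence in turn.

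For the \emph{if} direction, suppose first that $G$ is cyclic of order $2^m$; then $G$ has a unique subgroup of each order dividing $|G|$, in particular a unique subgroup of order $2$, and hence a unique involution. Suppose next that $G$ is a generalized quaternion $2$-group, so that $G=\langle a,b\mid a^{2n}=e,\ b^2=a^n,\ a^b=a^{-1}\rangle$ with $n$ a power of $2$. The cyclic subgroup $\langle a\rangle$ has order $2n$ and so contains the single involution $a^n$; it therefore suffices to show that every element of $G\setminus\langle a\rangle$ has order $4$. Such an element has the form $a^ib$, and using $ba^i=a^{-i}b$ one computes $(a^ib)^2=a^i(ba^i)b=a^ia^{-i}b^2=b^2=a^n$, which has order $2$; hence $a^ib$ has order $4$. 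This settles the routine direction.

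For the \emph{only if} direction, assume $G$ is a $2$-group with a unique involution $z$. Observe that $z$ lies in every nontrivial subgroup of $G$, since a nontrivial finite $2$-group has an element of order $2$ by Cauchy's theorem and that element must be $z$; thus $z$ is the unique involution of each nontrivial subgroup. The natural strategy is induction on $|G|$: taking a maximal subgroup $M$ of index $2$, the inductive hypothesis gives that $M$ is cyclic or generalized quaternion, and one then analyses how $G$ sits above $M$ — tracking the conjugation action of $G\setminus M$ on a suitable normal cyclic subgroup and the squares of elements outside $M$ — to conclude that $G$ itself possesses a cyclic subgroup of index $2$. Invoking the classical classification of $2$-groups with a cyclic maximal subgroup (cyclic, dihedral, semidihedral, modular, or generalized quaternion) and discarding all but the first and last by an involution count (a dihedral, semidihedral, or modular $2$-group of order at least $8$ has more than one involution) then finishes the argument. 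However, this entire implication is precisely Burnside's classical theorem that a $2$-group with a unique subgroup of order $2$ is cyclic or generalized quaternion, and the most economical route — the one the paper takes — is simply to quote it.

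The only nontrivial point is the \emph{only if} direction, and the expected obstacle there is the inductive step passing from the structure of a maximal subgroup to that of $G$, with generalized quaternion $M$ of order $8$ being the awkward case since its cyclic subgroup of index $2$ is not characteristic. Because this is a well-documented classical result, I would not reprove it but instead cite Burnside, as in the excerpt; everything else — the computation in the generalized quaternion presentation and the involution counts — is elementary.
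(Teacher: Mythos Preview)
Your proposal is correct and matches the paper's approach: the paper likewise handles the \emph{if} direction by observing that cyclic $2$-groups obviously have a unique involution and that in a generalized quaternion group every element outside $\langle a\rangle$ has order $4$, and for the \emph{only if} direction it simply cites Burnside's classical theorem. Your explicit computation $(a^ib)^2=a^n$ and the inductive sketch for the Burnside direction are extra detail, but the substance and route are the same.
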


We are now in a position to prove Theorem~\ref{thm5}.

\begin{proof}[Proof of Theorem~$\ref{thm5}$]
The equivalence of~(b) and~(c) follows from Lemma~\ref{lem2}.
It remains to show that (a)$\Leftrightarrow$(b).
This is obvious if $|G|=2$.
Thus we assume $|G|>2$ in the rest of the proof.

First suppose that~(a) holds, that is, $G$ has no nontrivial proper subgroup as a perfect code.
Then Corollary~\ref{cor1} implies that $|G|$ is even.
Let $P$ be a Sylow~$2$-subgroup of $G$.
Then $P$ is nontrivial, and Corollary~\ref{cor5} asserts that $P$ is a perfect code of $G$.
This leads to $P=G$, which means that $G$ is a $2$-group.
We then derive from Corollary~\ref{cor2} that for each nontrivial proper subgroup $H$ of $G$, there exists $x\in\Nor_G(H)\setminus H$ with $x^2\in H$ such that $Hx$ has no involution.
Let $|G|=2^m$, where $m\geqslant2$ is an integer.
Take $H_1$ to be any subgroup of order $2$ in $G$.
Suppose we have a subgroup $H_i$ of order $2^i$ in $G$ for some $i\in\{1,\dots,m-1\}$ such that $H_i$ has a unique involution.
Then there exists $x_i\in\Nor_G(H_i)\setminus H_i$ with $x_i^2\in H_i$ such that $H_ix_i$ has no involution.
Now take $H_{i+1}=\langle H_i,x_i\rangle$.
It follows that $H_{i+1}=H_i\cup H_ix_i$ is a group of order $2|H_i|=2^{i+1}$, and each involution of $H_{i+1}$ lies in $H_i$ and so is a unique involution of $H_{i+1}$.
By induction, we then obtain a subgroup $H_m$ of order $2^m$ in $G$ such that $H_m$ has a unique involution.
Since $|H_m|=2^m=|G|$, this implies that $G=H_m$ has a unique involution, as~(b) states.

Next suppose that~(b) holds, that is, $G$ is a $2$-group with a unique involution.
Let $H$ be a nontrivial proper subgroup of $G$.
Then there exists a subgroup $K$ of $G$ such that $H<K$ and $|K|=2|H|$.
In particular, $H$ is normal in $K$.
Take $x\in K\setminus H$.
Then $K=H\cup Hx$ with $H\cap Hx=\emptyset$, and $x\in\Nor_G(H)\setminus H$ with $g^2\in H$.
Moreover, since $H$ has an involution and the involution of $G$ is unique, we infer that there is no involution in $Hx$.
Hence by Corollary~\ref{cor2}, $H$ is not a perfect code of $G$.
This shows that $G$ has no nontrivial proper subgroup as a perfect code, as~(a) states, which completes the proof.
\end{proof}

\vskip0.1in
\noindent\textsc{Acknowledgement.} The second author gratefully acknowledges the financial support from China Scholarship Council no.~201806010040. The authors would like to thank Prof.~Sanming Zhou for bringing~\cite{MWWZ} into their attention and the anonymous referee for helpful suggestions.

\end{document}